\theoremstyle{plain}
\newtheorem{thrm}{Theorem}[section]
\newtheorem{lmm}[thrm]{Lemma}
\newtheorem{prpstn}[thrm]{Proposition}
\newtheorem{corollary}[thrm]{Corollary}
\numberwithin{sblmm}{thrm}
\theoremstyle{remark}
\newtheorem*{rmk}{Remark}
\numberwithin{equation}{section}
\newcommand{\Z}{\mathbb{Z}}
\renewcommand{\P}{\mathbb{P}}
\newcommand{\CD}{\mathcal{D}}
\newcommand{\CN}{\mathcal{N}}
\renewcommand{\phi}{\varphi}
\newcommand{\bs}\boldsymbol{}
\newcommand{\eq}[2]{ \begin{equation} \label{#1}\begin{split} #2 \end{split} \end{equation} }
\newcommand{\als}[1]{\begin{align*} #1 \end{align*} }
\renewcommand{\mod}[1]{\,({\rm mod}\,#1)}
\definecolor{brown}{rgb}{0.5,0.3,0.2}
\definecolor{orange}{rgb}{0.7,0.3,0}
\definecolor{blue}{rgb}{.2,.6,.75}
\definecolor{green}{rgb}{.4,.7,.4}
 \title[Best possible densities]{Best possible densities of Dickson $m$-tuples, as a consequence of Zhang-Maynard-Tao}
\author[A. Granville]{Andrew Granville}
\address{AG: D\'epartement de math\'ematiques et de statistique\\
Universit\'e de Montr\'eal\\
CP 6128 succ. Centre-Ville\\
Montr\'eal, QC H3C 3J7\\
Canada}
\email{andrew@dms.umontreal.ca}
\author[D. M. Kane]{Daniel M. Kane}
\address{DMK: Department of Mathematics \\ University of California-San Diego \\ 9500 Gilman Drive \#0404 \\ La Jolla, CA 92093 \\ USA}
\email{dakane@math.ucsd.edu}
\author[D. Koukoulopoulos]{Dimitris Koukoulopoulos}
\address{DK: D\'epartement de math\'ematiques et de statistique\\
Universit\'e de Montr\'eal\\
CP 6128 succ. Centre-Ville\\
Montr\'eal, QC H3C 3J7\\
Canada}
\email{koukoulo@dms.umontreal.ca}
\author[R. J. Lemke Oliver]{Robert J. Lemke Oliver}
\address{RJLO: Department of Mathematics \\ Stanford University \\ Building 380 \\ Stanford, CA 94305 \\ USA}
\email{rjlo@stanford.edu}
\thanks{The first and third authors are supported by Discovery Grants from the Natural Sciences and Engineering Research Council of Canada. The second and fourth authors were supported by NSF Mathematical Sciences Postdoctoral Research Fellowships.}
\begin{document}

\date{\today}


\begin{abstract} We determine for what proportion of integers $h$  one now knows that there are infinitely many prime pairs $p,\ p+h$ as a consequence of the Zhang-Maynard-Tao theorem.   We consider the natural generalization of this to $k$-tuples of integers, and we determine the limit of what can be deduced assuming only the Zhang-Maynard-Tao theorem.
 \end{abstract}

\maketitle



\section{Introduction and statement of results}
The twin prime conjecture states that there are infinitely many pairs of integers $(n,n+2)$ which are simultaneously primes. More generally, Hardy and Littlewood conjectured that each entry of the $k$-tuple $(n+h_1,\cdots,n+h_k)$ should be prime infinitely often, unless there is a trivial reason why this cannot happen. This ``trivial reason" is about divisibility by small primes $p$. For example, the triplet $(n+2,n+4,n+6)$ can never all be prime if $n>1$ because at least one of them must be a multiple of 3. So we call a $k$-tuple \emph{admissible} if, for each prime $p$, the reductions of the numbers $h_1,\dots,h_k$ modulo $p$ do not cover all of $\Z/p\Z$. With this definition in hand, the Hardy-Littlewood conjecture states that if $(h_1,\dots,h_k)$ is an admissible $k$-tuple, then there are infinitely many integers $n$ for which the numbers $n+h_1,\dots,n+h_k$ are all prime.

Even though we are still far from proving the full Hardy-Littlewood conjecture, there has been remarkable progress made towards it recently. Firstly, in May 2013, Yitang Zhang \cite{zhang} made headlines by proving that there are bounded gaps between primes, and specifically that
\[
\liminf_{n\to\infty} p_{n+1}-p_n < 70{,}000{,}000,
\]
where $p_n$ denotes the $n$-th prime. Then, in November 2013, James Maynard \cite{maynard} and Terence Tao independently showed, using somewhat different techniques, that $70{,}000{,}000$ can be replaced by $600$, and, as it stands right now, the best bound known is $246$, due to the Polymath project \cite{polymath8b}. Even more impressively, they proved that for any integer $m\ge1$ there is an integer $k=k_m$ such that if $(h_1,\dots,h_k)$ is an admissible $k$-tuple, then there are infinitely many integers $n$ for which at least $m$ of the numbers $n+h_1,\dots,n+h_k$ are prime. Obviously, $k_m\ge m$, and in \cite{polymath8b} it was shown that one can take $k_2=50$ and $k_m\ll e^{3.82 m}$.

We call a $k$-tuple of integers $(h_1,\dots,h_k)$ a \emph{Dickson $k$-tuple} if there are infinitely many integers $n$ for which $n+h_1,\ldots,  n+h_k$ are each prime. The Hardy-Littlewood conjecture is equivalent to the statement that ``\emph{all admissible $k$-tuples of integers are Dickson $k$-tuples}'', and the Maynard-Tao theorem implies that ``\emph{every admissible $k_m$-tuple of integers contains a Dickson $m$-tuple}''.  In particular, the Maynard-Tao theorem implies that Dickson $m$-tuples exist, yet no explicit example of a Dickson $m$-tuple is known!  Nevertheless, a simple counting argument (which also appeared in \cite{gran}) yields the following attractive consequence of the Maynard-Tao theorem.


\begin{corollary} \label{cor1} A positive proportion of $m$-tuples of integers are Dickson $m$-tuples.
\end{corollary}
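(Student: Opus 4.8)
The plan is to deduce this from the Maynard--Tao theorem by an elementary double-counting argument. Fix $m$, let $k = k_m$ be the constant furnished by that theorem, and set $P = \prod_{p \le k} p$. I will work with subsets rather than ordered tuples: for a large parameter $x$, let $\mathcal{A}_x$ be the family of admissible $k$-element subsets of $\{1,\dots,x\}$, and $\mathcal{D}_x$ the family of $m$-element subsets of $\{1,\dots,x\}$ that are Dickson $m$-tuples. The Maynard--Tao theorem says that for each $H \in \mathcal{A}_x$ there are infinitely many $n$ with at least $m$ of the shifts $n+h$ ($h \in H$) prime; since $H$ has only finitely many $m$-element subsets, pigeonhole produces one such subset $D$ that works for infinitely many $n$, so every $H \in \mathcal{A}_x$ contains some $D \in \mathcal{D}_x$.

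The first step is the lower bound $|\mathcal{A}_x| \gg_k x^k$. It suffices to exhibit many admissible $k$-subsets, and any $k$ elements of the arithmetic progression $\{1, 1+P, 1+2P, \dots\}$ will do: modulo each prime $p \le k$ they all lie in the single class $1$, hence omit the class $0$, while modulo a prime $p > k$ a set of only $k$ residues cannot exhaust $\Z/p\Z$. This progression has more than $x/(2P)$ elements in $\{1,\dots,x\}$ once $x$ is large, so $|\mathcal{A}_x| \ge \binom{\lfloor x/(2P) \rfloor}{k} \gg_k x^k$.

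The second step is the double count. I would count the incidences $\{(H,D) : H \in \mathcal{A}_x,\ D \in \mathcal{D}_x,\ D \subseteq H\}$ in two ways: from the previous paragraph each of the $|\mathcal{A}_x|$ sets $H$ yields at least one incidence, while a fixed $m$-element set $D$ is contained in exactly $\binom{x-m}{k-m} \le x^{k-m}$ of the $k$-element subsets of $\{1,\dots,x\}$. Comparing the two counts,
\[
|\mathcal{D}_x| \ge \frac{|\mathcal{A}_x|}{\binom{x-m}{k-m}} \gg_k \frac{x^k}{x^{k-m}} = x^m.
\]
Since $\{1,\dots,x\}$ has only $\binom{x}{m} \le x^m$ subsets of size $m$, a positive proportion of them lie in $\mathcal{D}_x$. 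Passing back to ordered tuples is routine: an $m$-tuple of distinct integers is a Dickson $m$-tuple exactly when its underlying set is, each such set accounts for $m!$ tuples, and $m$-tuples with a repeated coordinate form a proportion $O_m(1/x)$ of all $x^m$ tuples in $\{1,\dots,x\}^m$; hence a positive proportion of $m$-tuples of integers are Dickson.

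I do not expect a real obstacle here — this is the ``simple counting argument'' promised in the text. The only points needing a little care are the pigeonhole step that upgrades ``$m$ good coordinates, infinitely often'' to an honest Dickson $m$-subtuple, the positivity of the density of admissible $k$-subsets (settled by the explicit progression above), and bookkeeping to ensure every implied constant depends on $k = k_m$ alone, so that the resulting density — however small — is a fixed positive number independent of $x$.
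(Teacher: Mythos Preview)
Your argument is correct and is essentially the paper's own proof: both fix $k=k_m$, produce a density-$\gg_k 1$ supply of integers whose $k$-subsets are automatically admissible, and then double-count pairs $(D,H)$ with $D\subset H$ to force $\gg_k x^m$ Dickson $m$-sets. The only cosmetic difference is that the paper draws its admissible tuples from the integers coprime to $R=\prod_{p\le k}p$ (density $\phi(R)/R$) rather than from your progression $1+P\Z$ (density $1/P$), which yields the explicit constant $(\phi(R)/(kR))^m$ but is otherwise the same idea.
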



\begin{proof}
Let $k=k_m$, so that $m\leq k$, and define $R=\prod_{p\leq k} p$ and $x=NR$ for some (very large) integer $N$. We let
\[
\CN=\{ n\in (-x,x]:\ (n,R)=1\} ,
\]
so that $|\CN| =2x\phi(R)/R$. Any subset of $k$ elements of $\CN$ is admissible, since it does not contain any integer $\equiv 0 \pmod p$ for any prime $p\leq k$.  There are $\binom{ |\CN|}{k}$ such $k$-tuples. Each contains a Dickson $m$-tuple by the Maynard-Tao theorem.

Now suppose that there are $T(x)$ Dickson $m$-tuples within $\CN$. Any such $m$-tuple is a subset of exactly $\binom{ |\CN|-m}{k-m}$ of the $k$-subsets of $\CN$, and hence
\[
T(x) \cdot \binom{ |\CN|-m}{k-m} \geq \binom{ |\CN| }{k},
\]
and therefore
\[
T(x)\geq  \binom{|\CN|}k \left/ \binom {|\CN|-m}{k-m} \right.
	 \geq (|\mathcal N|/k)^m=\left(\frac{\phi(R)}{kR}\right)^m \cdot (2x)^m ,
\]
as desired. 
\end{proof}


The main goal of this paper is to provide better lower bounds on the proportion of $m$-tuples that are Dickson $m$-tuples. If $\Delta(m)$ is the proportion of such $m$-tuples, then Corollary \ref{cor1} implies that $\Delta(m)>0$. We are interested in determining the best possible lower bound on $\Delta(m)$ assuming only the results of Zhang, Maynard, and Tao and treating them as ``black boxes'':\ If $\CD_m$ is the set of Dickson $m$-tuples, then  there is an integer $k=k_m$ for which:
\begin{itemize}
\item $\CD_m$ is translation-invariant, that is to say, if $(h_1,\dots,h_m)\in\CD_m$ and $t\in\Z$, then $(h_1+t,\dots,h_m+t)\in\CD_m$;
\item $\CD_m$ is permutation-invariant, that is to say, if $(h_1,\dots,h_m)\in\CD_m$ and $\sigma\in S_m$, then $(h_{\sigma(1)},\dots,h_{\sigma(m)}) \in\CD_m$;
\item for any admissible $k$-tuple, $(x_1,\dots,x_{k})$, there exist distinct $h_1,\ldots,h_m \in \{x_1,\ldots,x_{k}\}$ such that $(h_1,\ldots,h_m)\in \CD_m$.
\end{itemize}
We call any set $A\subset\Z^m$ that has the above properties \emph{$(m,k)$-plausible}. Then we define
\eq{eqn:delta_def}{
\delta(m,k) = \min\left\{  \liminf_{N\rightarrow\infty} \frac{|A\cap [-N,N]^m|}{(2N)^m}   : A\subset \Z^m,\ A\ \text{is} \ (m,k)-\text{plausible} \right\} ;
}
and, for any $m\ge 1$, we have that
\[
\Delta(m)\ge \delta(m,k_m)
\]
by the Zhang-Maynard-Tao theorem. Our main result is the following.


\begin{thrm}\label{DensityTheorem}
For $k\geq m\geq 1$, we have, uniformly
$$
\delta(m,k) =   \frac{(\log 2m)^{O(m)} }{    ( \frac{k}{m} \log\log \frac{3k}{m} )^{m-1}}.
$$
\end{thrm}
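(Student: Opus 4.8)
The plan is to prove matching upper and lower bounds for $\delta(m,k)$, with the upper bound (a construction of a small $(m,k)$-plausible set) being the crux. For the upper bound, I would construct a translation- and permutation-invariant set $A \subset \Z^m$ that is as sparse as possible yet still meets every admissible $k$-tuple. The natural idea is to take $A$ to consist of all $m$-tuples $(h_1,\dots,h_m)$ such that, for each prime $p$ in some carefully chosen set $\mathcal P$ of primes of size roughly $k/m$, the residues $h_1,\dots,h_m$ modulo $p$ miss some residue class — in other words, we want the \emph{complement} of $A$ to be a union, over $p \in \mathcal P$, of tuples whose reductions mod $p$ cover $\Z/p\Z$; such a tuple can never be extended to an admissible $k$-tuple avoiding trouble at $p$. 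Actually the cleaner formulation: let $A$ be the set of $(h_1,\dots,h_m)$ for which there exists $p \in \mathcal P$ with $\{h_1,\dots,h_m\} \bmod p$ having size $\le$ something, forcing any admissible $k$-superset to spread out. One computes that if $p \approx k/m$ then a ``random'' $m$-tuple hits about $m \cdot \frac{p-1}{p} \cdot \frac1p \cdot \binom{m}{2}$-type savings; iterating over $\log\log(3k/m)$ many dyadic scales of primes near $k/m$ gives the density $\bigl(\frac{k}{m}\log\log\frac{3k}{m}\bigr)^{-(m-1)}$, and the $(\log 2m)^{O(m)}$ factor absorbs the combinatorial losses from the $m$ coordinates and from ensuring admissibility at the small primes $p \le m$ (which behaves like the $\phi(R)/R$ factor in Corollary~\ref{cor1}). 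One must then verify the third plausibility condition: given any admissible $k$-tuple $(x_1,\dots,x_k)$, a pigeonhole/covering argument shows that for each $p \in \mathcal P$ the $x_i$ occupy at most $p-1$ classes mod $p$, so among the $\binom{k}{m}$ sub-$m$-tuples a positive fraction land in $A$ — in particular at least one does.

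For the lower bound (every $(m,k)$-plausible set has density at least this large), I would argue that $A$ must contain a translate/permutation of ``most'' $m$-tuples drawn from any admissible configuration. Concretely, take $\mathcal N$ as in the proof of Corollary~\ref{cor1} but now sieved more aggressively: restrict to $n$ avoiding $0 \bmod p$ for all primes $p$ up to a threshold $\asymp k/m$. Any $k$ elements of this set form an admissible $k$-tuple, so $A$ contains at least one $m$-subset of each; running the same double-counting as in Corollary~\ref{cor1} but tracking the density of this sieved set — which is $\prod_{p \lesssim k/m}(1 - \mathrm{stuff}/p) \asymp (\text{const})/\log(k/m)$ per coordinate, and optimized over the sieving range gives the $\log\log$ — yields $|A \cap [-N,N]^m| \gg (\text{density})^{m-1} (2N)^m$. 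The exponent is $m-1$ rather than $m$ because translation-invariance lets us fix one coordinate for free. Sharpening the constant to get the correct $(\log 2m)^{O(m)}$ dependence requires choosing the sieve support carefully and, on the upper-bound side, a genuinely optimal $\mathcal P$; this is where the two bounds are made to meet.

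The main obstacle I expect is the construction achieving the upper bound with the \emph{correct} $\log\log$ factor: a naive single-prime construction only gives $(k/m)^{-(m-1)}$, and extracting the extra $(\log\log(3k/m))^{m-1}$ savings requires simultaneously exploiting many primes near $k/m$ while still guaranteeing that the resulting set meets every admissible $k$-tuple — the admissibility-meeting condition and the sparsity condition pull in opposite directions, and balancing them is delicate. A secondary obstacle is making everything uniform in both $m$ and $k$, in particular controlling the $(\log 2m)^{O(m)}$ error from the $m$-dimensional combinatorics (binomial coefficients $\binom{k}{m}$, losses at primes $p \le m$, and the permutation group $S_m$) without it degrading to something like $m^{O(m)}$ that would swamp the main term when $k/m$ is small.
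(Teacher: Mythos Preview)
Both halves of your plan have genuine gaps.

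\textbf{Upper bound.} Your construction never crystallizes, and the versions you sketch do not work. Taking $A$ to be the $m$-tuples admissible modulo each $p\in\mathcal P$ gives an $A$ of density close to $1$ once the primes exceed $m$, not a sparse set. Switching to ``there exists $p\in\mathcal P$ with the residues clustered'' produces a \emph{union} over primes, which only gets larger as you add primes, so ``iterating over many scales'' moves you in the wrong direction. The paper's construction is far simpler and is essentially the \emph{intersection} version: take a single modulus $q$ with $(m-1)\phi(q)<k$ and set
\[
A=\{(x_1,\dots,x_m):x_1\equiv\cdots\equiv x_m\pmod q\}.
\]
Admissibility forces any $k$-tuple into at most $\phi(q)$ classes mod $q$, so pigeonhole gives $m$ congruent elements; the density is $q^{-(m-1)}$. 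The $\log\log$ factor comes not from many primes near $k/m$ but from choosing $q$ to be a product of the \emph{small} primes (so that $q/\phi(q)\sim e^\gamma\log\log q$), which pushes $q$ up to about $e^\gamma\frac{k}{m-1}\log\log\frac{k}{m-1}$.

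\textbf{Lower bound.} The double-counting from Corollary~\ref{cor1} only yields
\[
\frac{|A\cap\mathcal N^m|}{|\mathcal N|^m}\ \gtrsim\ \frac{1}{k^m},
\]
and translation-invariance alone cannot repair this: to convert density in $\mathcal N^m$ to density in $\Z^m$ along translation classes one must control $\prod_{p\le k}(1-n_p(\mathbf h)/p)$, and the proportion of $\mathbf h\in\mathcal N^m$ for which this product is atypically large is of order $1/\bigl(m\binom{k-1}{m-1}\bigr)\asymp (m/k)^{m-1}$, which already swamps the $1/k^m$ you have. The paper closes this gap with the Lov\'asz Local Lemma: choosing $n_1,\dots,n_k\in\mathcal N$ at random, if $|A\cap\mathcal N^m|/|\mathcal N|^m\le 1/\bigl(8m\binom{k-1}{m-1}\bigr)$ then with positive probability no $m$-subset lies in $A$, contradicting plausibility. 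This gives the much stronger input $|A\cap\mathcal N^m|/|\mathcal N|^m>1/\bigl(8m\binom{k-1}{m-1}\bigr)$, after which the translation-class argument (combined with a large-deviation estimate for $\sum_p X_p/p$) produces the stated bound. Your suggestion to sieve only up to $k/m$ also fails outright: $k$-subsets of such a set need not be admissible at primes $p\in(k/m,k]$.
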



In the Maynard-Tao Theorem we know that one can obtain $k_m\leq e^{cm}$ for some constant $c>0$.
The best value known for $c$ is a little smaller than $3.82$, and Tao \cite{tao, gran} showed
that the Maynard-Tao technique cannot be (directly) used to obtain a constant smaller than $2$.  So we deduce that
\[
\Delta(m)  \geq \delta(m,k_m) \geq    e^{-(c+o(1))m^2}\gg  e^{-4m^2},
\]
and that this can, at most, be improved to $\gg e^{-2m^2}$ using the currently available methods.

A \textsl{de Polignac  number} is an integer $h$ for which there are infinitely many pairs of primes $p$ and $p+h$ (the twin prime conjecture is the special case $h=2$).  A positive proportion of integers are de Polignac numbers, as follows from both results above, but we wish to determine the best explicit lower bounds possible.  In Section \ref{sec:depolignac}, using quite elementary arguments, we will show that
\begin{equation} \label{eqn:dP_lowerbound}
\Delta(2)
	\geq \delta(2,k_2) \geq \frac{1}{49} \ \prod_{p\leq 50} \left( 1 -\frac 1p\right) 
	\approx 0.002830695767 \ldots 
	> \frac 1{354},
\end{equation}
and that this cannot be improved dramatically without improving the value of $k_2=50$.  We remark that this bound is surprisingly good in light of the fact that we can only deduce that there is some $h\leq 246$ for which there are infinitely many prime pairs $p,p+h$.

\medskip

Finally, we note that it would perhaps be more natural to consider the proportion $\Delta^\mathrm{ad}(m)$ of \emph{admissible} $m$-tuples that are Dickson $m$-tuples, rather than the proportion of all $m$-tuples.  To this end, in Section \ref{sec:adm_prop}, we determine the proportion $\rho_{\mathrm{ad}}(m)$ of all $m$-tuples that are admissible.  Surprisingly, this question has seemingly not been addressed in the literature, and we prove that
\begin{equation} \label{eqn:adm_prop}
\rho_{\mathrm{ad}}(m) = \frac{e^{o(m)}}{(e^\gamma \log m)^m}
\end{equation}
as $m\to\infty$. 



\section{The density of de Polignac numbers}
\label{sec:depolignac}

To prove a lower bound on the density of de Polignac numbers, we consider admissible sets $B$ of integers that do not contain a Dickson pair.  If the prime $k$-tuplets conjecture is true, then necessarily $|B|=1$, while Zhang's theorem implies that $|B|\leq k_2-1$.  Because of this upper bound, there must be maximal such sets, in that, for any $t\not\in B$ such that $B\cup \{ t\}$ is admissible, $B\cup \{ t\}$ contains a Dickson pair. This condition implies that
$t-B$ contains a de Polignac number, and we will obtain a lower bound on $\delta(2,k_2)$ by varying $t$.  To this end, for an admissible set $B$, define $\eta(B)$ to be the minimal lower density of sets $A$ with the property that $t-B$ contains an element of $A$ whenever $B\cup\{t\}$ is admissible.  Moreover, for any integer $\ell$, set $\eta(\ell)$ to be the infimum of $\eta(B)$ as $B$ runs over admissible sets of size $\ell$.  We thus have that
\[
\Delta(2) \geq \delta(2,k_2) \geq \min_{1\leq \ell \leq k_2-1} \eta(\ell),
\]
and we will prove the following.

\begin{prpstn} \label{prop:eta}
For any integer $\ell$, we have that $\eta(\ell) \sim e^{-\gamma}/\ell\log\ell$, and explicitly that
\[
\frac{1}{\ell} \prod_{p\leq \ell+1} \left(1-\frac{1}{p}\right) \leq \eta(\ell) \leq \frac{1}{\ell-y} \prod_{p\leq y} \left(1-\frac{1}{p}\right)
\]
for any positive integer $y\leq \ell-1$.
\end{prpstn}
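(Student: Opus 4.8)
The plan is to prove the two explicit inequalities; the asymptotic relation $\eta(\ell)\sim e^{-\gamma}/(\ell\log\ell)$ will then follow from Mertens' theorem by taking $y=\lceil \ell/\log\ell\rceil$ (which is a positive integer $\le\ell-1$ once $\ell$ is large enough): with this choice $\ell-y\sim\ell$ and $\log y\sim\log\ell$, so $\prod_{p\le y}(1-1/p)\sim e^{-\gamma}/\log\ell$ and the upper bound becomes $\sim e^{-\gamma}/(\ell\log\ell)$, matching $\frac1\ell\prod_{p\le\ell+1}(1-1/p)\sim e^{-\gamma}/(\ell\log\ell)$.

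For the lower bound, I would fix an admissible $B$ with $|B|=\ell$ and set $V_B=\{t\in\Z:B\cup\{t\}\text{ admissible}\}$. A prime $p$ constrains $t$ only when $B$ already occupies exactly $p-1$ classes mod $p$, in which case $t$ must avoid the one class not covered by $B$; since $|B\bmod p|\le\ell$, this happens only for $p\le\ell+1$. Hence $V_B$ is a union of residue classes modulo $Q:=\prod p$ (the product over these finitely many ``active'' primes) of density $\prod(1-1/p)\ge\prod_{p\le\ell+1}(1-1/p)$. If $A$ has the property defining $\eta(B)$ then $V_B\subseteq\bigcup_{b\in B}(A+b)$, so with $M=\max_{b\in B}|b|$,
\[
|V_B\cap[-N,N]|\le\sum_{b\in B}\bigl|A\cap([-N,N]-b)\bigr|\le\ell\,\bigl|A\cap[-N-M,\,N+M]\bigr|;
\]
dividing by $2N$ and letting $N\to\infty$ (the periodicity of $V_B$ makes the left side $\sim 2N\prod(1-1/p)$, so this really controls the \emph{lower} density of $A$) gives $\underline d(A)\ge\frac1\ell\prod_{p\le\ell+1}(1-1/p)$. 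This holds for all legal $A$ and all admissible $B$ of size $\ell$, hence $\eta(\ell)\ge\frac1\ell\prod_{p\le\ell+1}(1-1/p)$.

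For the upper bound, fix a positive integer $y\le\ell-1$, set $n=\ell-y\ge1$, $P=\prod_{p\le y}p$, and $D=\prod_{p\le z}p$ with $z=\max(y,n)$. I would exhibit an admissible $B$ with $|B|=\ell$ together with a set $A$ having the covering property and density $\frac1{\ell-y}\prod_{p\le y}(1-1/p)$, as follows. Take $B=\{0,D,2D,\dots,(n-1)D\}\cup\{c_1,\dots,c_y\}$, where the $c_i$ are distinct integers, distinct from the listed multiples of $D$, chosen via the Chinese Remainder Theorem so that (i) modulo every prime $p\le y$ the set $\{0,c_1,\dots,c_y\}$ covers exactly $\Z/p\Z\setminus\{1\}$, and (ii) $c_i\equiv0\pmod p$ for every prime $p$ with $y<p\le\ell$. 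One checks $B$ is admissible (modulo $p\le y$ it misses the class $1$; modulo $y<p\le z$ it reduces to $\{0\}$; modulo $z<p\le\ell$ it reduces to $\{jD\bmod p:0\le j<n\}$, of size $n\le p-1$ since $p>z\ge n$; modulo $p>\ell$ it has $<p$ elements), and that $B\cup\{t\}$ is inadmissible whenever $t\equiv1\pmod p$ for some $p\le y$, so $V_B$ lies in the set $U$ of integers avoiding the class $1$ mod every $p\le y$ --- a union of $\phi(P)\,D/P$ residue classes mod $D$. Finally let $A$ consist, inside each of those classes $r\bmod D$, of a single arithmetic progression of common difference $nD$ (any offset). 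Then $d(A)=\frac{\phi(P)\,D/P}{nD}=\frac1{\ell-y}\prod_{p\le y}(1-1/p)$, and $A$ works: for $t\in U\supseteq V_B$, the $n$ numbers $t,t-D,\dots,t-(n-1)D$ are consecutive terms of the progression $t+D\Z$ lying in $U$'s class $t\bmod D$, so exactly one of them is in $A$; as these all belong to $t-B$, we get $(t-B)\cap A\ne\emptyset$. Hence $\eta(\ell)\le\eta(B)\le\frac1{\ell-y}\prod_{p\le y}(1-1/p)$.

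The genuinely delicate step is this construction of $B$: it must simultaneously miss exactly one residue class modulo every prime $p\le y$ --- which is what confines $V_B$ to few classes mod $P$ and creates the factor $\prod_{p\le y}(1-1/p)$ --- and contain an arithmetic progression of length $n=\ell-y$ whose common difference is divisible by $P$ --- which is what lets $A$ be $1/n$-sparse on those classes. Using $D=\prod_{p\le\max(y,n)}p$ in place of $P$ is the device that keeps $B$ admissible at the primes in $(y,\ell]$, and the computation shows it does not change $d(A)$. Everything else (Mertens, the CRT bookkeeping, and the error terms in the lower bound) is routine.
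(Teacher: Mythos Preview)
Your argument is correct. The lower bound is the same counting argument as the paper's: $V_B$ has density at least $\prod_{p\le\ell+1}(1-1/p)$, and $V_B\subseteq\bigcup_{b\in B}(A+b)$ forces $\underline d(A)\ge d(V_B)/\ell$.

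Your upper-bound construction is genuinely different from the paper's. The paper picks an auxiliary large prime $q=hv+1$ with $v=\ell-y$ (so it quietly invokes Dirichlet's theorem on primes in progressions), takes $B_1=\{b_0,\dots,b_v\}$ with $b_i\equiv ih\pmod q$ and $b_i\equiv 1\pmod{\prod_{p\le\ell}p}$, and lets $A$ be the integers $a$ with $(a+1,\prod_{p\le y}p)=1$ lying in one of the $h$ classes $0,1,\dots,h-1\pmod q$; this gives density $\frac{h}{hv+1}\prod_{p\le y}(1-1/p)<\frac{1}{\ell-y}\prod_{p\le y}(1-1/p)$. You instead replace the role of $q$ by the arithmetic progression $\{0,D,2D,\dots,(n-1)D\}$ with $D=\prod_{p\le\max(y,n)}p$, and let $A$ pick out every $n$-th element of each admissible class mod $D$. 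The trade-off: your route is more elementary (no Dirichlet, no auxiliary prime) and hits the stated bound on the nose; the paper's route actually produces a strictly smaller density for each finite $h$, though only the limiting value is recorded in the statement. Both constructions share the essential mechanism you identified as delicate---forcing $B$ to miss exactly one class modulo every $p\le y$ while carrying a structured piece of length $\ell-y$ that permits $A$ to be $1/(\ell-y)$-sparse.
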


For the particular application to $\Delta(2)$, we take $\ell\in\{1,\dots,49\}$ in Proposition \ref{prop:eta} to find that
\[
\delta(2,k_2) \geq \min_{1\le\ell\le 49} \eta(\ell) > 0.002830695767 > \frac{1}{354},
\]
while taking $y=13$ yields that $\eta(49) < 0.005328005328 < \frac{1}{187}$, so that, using our techniques, we can hope for an improvement in our lower bound for $\delta(2,k_2)$ by a factor of at most about two.  
However, we can do better: the explicit upper bound given in Lemma \ref{lem:explicit_upper} below shows that $\delta(2,50) \leq \frac{1}{210} < 0.0048$.

\begin{proof}[Proof of Proposition \ref{prop:eta}]
We begin with the lower bound for $\eta(\ell)$.  Suppose that $B$ is admissible of size $\ell$, so that for each prime $p\leq \ell+1$ there exists a residue class $n_p\pmod p$ such that $p\nmid n_p+b$ for each $b\in B$. If $t\not \equiv -n_p \pmod p$ for all $p\leq \ell+1$ then
$B\cup \{ t\}$ is admissible and so contains a Dickson pair, and, as remarked above, $t$ must be one of that pair, else $B$ would have contained a Dickson pair. If $x$ is large then the number of such integers $t\leq x$ is
\[ \sim \prod_{p\leq \ell+1} \left( 1 -\frac 1p\right) x \]
and we know that, for some $b\in B$, $t-b$ is a de Polignac number.  Given $t\in\Z$, an integer can be written in at most $\ell$ ways as $t-b$ for $b\in B$, since $|B|=\ell$. Hence,
\[
\eta(B) \geq \frac 1\ell \  \prod_{p\leq \ell+1} \left( 1 -\frac 1p\right) \sim
\frac{e^{-\gamma}}{\ell \log \ell} 
\]
as $\ell\to\infty$. 


We now turn to the upper bound.  We will construct sets $A$ and $B$ with the properties that $B$ is admissible and that if $t$ is such that $B\cup \{ t\}$ is admissible, then there exists $b=b_t\in B$ such that $t-b_t\in A$.

Let $y\in\{1,\dots,\ell-1\}$ and set $v=\ell-y$. Moreover, define $r=\prod_{p\leq y}p$ and $m=\prod_{p\leq \ell} p$, and select $h$ large so that $q:=hv+1$ is a prime $>\ell$.  We define $A$ to be the set of all integers $a$ for which $(a+1,r)=1$ and $a\equiv 0,1,2,\ldots, $ or $h-1 \pmod q$.  The density $\delta(A)$ of $A$ is
\[
\frac h q \prod_{p\leq y} \left( 1 -\frac 1p\right)
\le \frac{1}{\ell-y} \prod_{p\leq y} \left( 1 -\frac 1p\right)  .
\]
Our set $B$ will be constructed as the union of two sets, $B_1$ and $B_2$.  For $B_1$, we take $B_1:=\{ b_0,b_1,\ldots ,b_v\}$  where each $b_i$ is chosen to satisfy $b_i\equiv ih \pmod q$ and $b_i\equiv 1 \pmod m$, and we choose $B_2$ to be a set of $y-1$ integers covering the residue classes $2,\dots,p-1 \pmod{p}$ for each prime $p\leq y$. We note that $B:=B_1\cup B_2$ has $\ell$ elements and is admissible: it occupies the congruence classes $1, \dots, p-1 \pmod p$ for all $p\leq y$ and covers at most $y$ classes modulo $p$ for $p>y$.

If $t$ is such that $B\cup \{ t\}$ is admissible, then $t\not\equiv 0 \pmod p$ for all primes $p\leq y$. We can write
$t\equiv ih+j \pmod q$ for some $i$ and $j$ satisfying $0\leq i\leq v$ and $0\leq j\leq h-1$. Consider
$t-b_i$. This is $\equiv j \pmod q$ and $\not\equiv -1 \pmod p$ for all primes $p\leq y$. Hence
$t-b_i\in A$, so that $\eta(\ell) \geq \delta(A)$, and the result follows.
\end{proof}

\begin{rmk}
Pintz \cite{pintz-polignac} showed that there is an ineffective constant $C$ such that every interval of length $C$ contains a de Polignac number.  The proof of the lower bound above furnishes a different proof of this result: the values of $t$ that appear are periodic modulo $r:=\prod_{p\leq k_2}p$, so, given a maximal set $B$, we obtain Pintz's result with $C=r+\max_{b\in B} b- \min_{b\in B} b$.
\end{rmk}


\section{Proof of Theorem \ref{DensityTheorem}}


We begin with a strong version of the upper bound of Theorem \ref{DensityTheorem}.
\begin{lmm} \label{lem:explicit_upper} If $k\ge m\ge2$ with $k/m\to\infty$, then 
\[
\delta(m,k) \leq   \left(\frac{e^{-\gamma}+o(1)}{\frac{k}{m-1}\log\log\frac{k}{m-1} } \right)^{m-1}  .
\]
\end{lmm}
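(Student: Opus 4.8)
The goal is to construct an $(m,k)$-plausible set $A \subset \Z^m$ of small density. The key idea is to reduce to the $m=2$ case already handled (essentially) in Proposition \ref{prop:eta}, by a tensor/product construction. Concretely, I would first build, for the pair case, an admissible set $B$ of size roughly $\ell = k/(m-1)$ together with a set $A_0 \subset \Z$ of density about $\left(e^{-\gamma}+o(1)\right)/(\ell \log\log \ell)$ — wait, here one should be careful: Proposition \ref{prop:eta} gives density $\sim e^{-\gamma}/(\ell\log\ell)$ for the pair problem, but the $\log\log$ in the lemma signals that the relevant parameter is different. So instead I would take $B$ to be an admissible set of size roughly $k/(m-1)$ whose elements are constrained to avoid residue $0$ modulo all primes up to some threshold; the point is that a $t$ avoiding the ``forbidden'' classes must, together with $B$, be admissible, forcing $t$ into a translate of $A_0$.

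The main construction: choose thresholds and set $k = (m-1)\ell + O(1)$. Partition the coordinates, or rather work with the observation that any admissible $k$-tuple $(x_1,\dots,x_k)$, after deleting a bounded ``core'' $B$ of size $\le k - (m-1)$ chosen to block small primes, leaves at least $m-1$ coordinates $x_{i_1},\dots,x_{i_{m-1}}$ free; by the pair-type argument each of these, paired suitably, lands in a translate of a one-dimensional set $A_0$ of the stated density. Then define
\[
A = \left\{ (h_1,\dots,h_m) \in \Z^m : h_2 - h_1 \in A_0,\ \dots,\ h_m - h_1 \in A_0 \right\},
\]
or a symmetrized version thereof, so that $A$ is automatically translation- and permutation-invariant (the latter after taking an appropriate symmetric closure or by choosing $A_0$ symmetric). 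Its density is $\delta(A_0)^{m-1} = \left(\frac{e^{-\gamma}+o(1)}{\frac{k}{m-1}\log\log\frac{k}{m-1}}\right)^{m-1}$, matching the claim. The bulk of the work is checking the third plausibility axiom: given an admissible $k$-tuple, one must locate distinct $h_1,\dots,h_m$ among its entries with all pairwise differences $h_j - h_1$ in $A_0$. I would do this greedily: fix a base point $h_1$ equal to one of the $x_i$; the admissibility of the full tuple restricts the residues of the other $x_i$ enough (via the CRT choice of $n_p$'s used to define $A_0$) that a positive proportion of them satisfy $x_i - h_1 \in A_0$, and one needs $m-1$ of them, which follows provided $k$ is large enough relative to $m$ — precisely the regime $k/m \to \infty$.

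The parameter $\log\log$ rather than $\log$ enters because, to make the pairing argument work simultaneously for $m-1$ different free coordinates against the same base point, the set $A_0$ cannot simply be ``avoid $-1 \bmod p$ for $p \le \ell$'' (that would cost only $\log \ell$); instead one must use a denser sieve that survives the $(m-1)$-fold intersection, which is where a construction in the spirit of the upper-bound half of Proposition \ref{prop:eta} (the block-of-$h$-consecutive-residues-mod-$q$ trick) gets iterated, producing the $\log\log$.

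I expect the main obstacle to be exactly this last point: verifying that one can choose the moduli and residue classes defining $A_0$ so that (i) $A_0$ has density as small as $(e^{-\gamma}+o(1))/(\frac{k}{m-1}\log\log\frac{k}{m-1})$, and (ii) for \emph{every} admissible $k$-tuple there are genuinely $m-1$ free coordinates landing in the right translate of $A_0$ — the two requirements pull in opposite directions (smaller $A_0$ makes (ii) harder), and balancing them is what pins down the $\log\log$ and the constant $e^{-\gamma}$. A secondary technical nuisance is arranging permutation-invariance cleanly; I would handle it by replacing $A$ with $\bigcup_{\sigma \in S_m} \sigma(A)$ and noting this at most multiplies the density by $m!$, which is absorbed into the $o(1)$ in the exponent since $(m!)^{1/(m-1)} = e^{O(\log m)}$ and the base is $\Theta(k/m)$ with $k/m\to\infty$ — though one should double-check this absorption is legitimate in the stated uniformity, and if not, use a more careful symmetric construction from the outset.
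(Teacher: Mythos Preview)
Your plan is far more elaborate than needed, and the gap you yourself flag (the construction of $A_0$ with the right density and the right covering property) is precisely where the argument lives. You never actually build $A_0$, and your diagnosis of where the $\log\log$ comes from is off: it has nothing to do with an $(m-1)$-fold intersection or an iterated block trick.

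The paper's proof is a one-line pigeonhole. Take $q$ to be the largest primorial with $(m-1)\phi(q)<k$, and set
\[
A=\{(x_1,\dots,x_m):x_1\equiv x_2\equiv\cdots\equiv x_m\ \mathrm{mod}\ q\}.
\]
Any admissible $k$-tuple occupies at most $\phi(q)$ residue classes modulo $q$ (one class is forbidden for each prime $p\mid q$), so since $k>(m-1)\phi(q)$, some class contains at least $m$ of the $x_i$; those $m$ form a tuple in $A$. Translation- and permutation-invariance are immediate. The density of $A$ is $q^{-(m-1)}$, and Mertens' theorem gives $q\ge (e^{\gamma}+o(1))\frac{k}{m-1}\log\log\frac{k}{m-1}$ --- that ratio $q/\phi(q)\sim e^{\gamma}\log\log q$ for primorials is the sole source of the $\log\log$.

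In your own framework this is just $A_0=q\Z$: a single modulus, no sieving, no greedy selection of a base point, no symmetrization (the set is already symmetric). Your proposed route through Proposition~\ref{prop:eta} and a tensor construction could perhaps be pushed through, but it is working much harder than necessary, and the ``main obstacle'' you identify is not an obstacle at all once you see the pigeonhole.
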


\begin{proof}
Pick $q$ as large as possible so that $(m-1)\phi(q) < k$. We claim that
$$
A = \{(x_1,\ldots,x_m):x_1\equiv x_2 \equiv \ldots \equiv x_m \mod{q}\}
$$
is $(m,k)$-plausible. This is because any admissible set with $k$ elements may take at most $\phi(q)$ distinct values modulo $q$, so by the pigeonhole principle, at least $m$ of these values must be congruent modulo $q$, giving an $m$-tuple contained in $A$.  On the other hand, $q$ is of size $\geq (e^\gamma+o(1))\frac{k}{m-1}\log\log\frac{k}{m-1}$ as $k/m\to\infty$, and $A$ has density $1/q^{m-1}$. This completes the proof.
\end{proof}

The proof of the lower bound is somewhat more involved. 
%
%
A key ingredient is the Lov\'asz Local Lemma:
\medskip

\noindent \textbf{Lov\'asz Local Lemma.}\ \textit{ Suppose that $E_1,\ldots, E_n$ are events,  each of which occurs with probability $\leq p$ and depends on no more than $d$ of the others. If $d\leq 1/(4p)$ then the probability that no $E_j$ occurs is at least $e^{-2pn}$.}
\medskip

The input for the Lov\'asz Local Lemma will come from the following technical result.

\begin{lmm}\label{TechLemm} 
Let $k\ge m\ge1$. Consider a translation and permutation invariant set $A\subset ((-2x,2x]\cap \Z)^m$, and set $\CN=\{n\in(-x,x]\cap \Z: (n,\prod_{p\le k}p)=1 \}$. If
\[
\frac{|A\cap \CN^m|}{|\CN^m|} > \frac{1}{8 m \binom{k-1}{m-1}},
\]
and $x$ is large enough in terms of $m$ and $k$, then 
\[
|A| \ge  \frac{ (\log 3m)^{O(m)} }{  ( \frac km\log\log \frac km)^{m-1}} \cdot x^m .
\]
\end{lmm}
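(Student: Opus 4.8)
The hypothesis says that $A$ occupies a non‑negligible proportion of $\CN^m$; the conclusion is that $A$ then occupies a proportion of the full box $((-2x,2x]\cap\Z)^m$ comparable (up to the usual $(\log 3m)^{O(m)}$) to the extremal density produced by Lemma~\ref{lem:explicit_upper}. The plan is to use translation invariance to pass to ``difference patterns'', turn both the hypothesis and the desired conclusion into a linear‑programming problem, and solve it via a Rankin‑type tail estimate.

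\emph{Step 1: reduction to difference patterns.} Put $R=\prod_{p\le k}p$. By translation invariance, membership $(h_1,\dots,h_m)\in A$ depends only on $\underline d=(h_2-h_1,\dots,h_m-h_1)$; let $\CD=\{\underline d\in\Z^{m-1}:(0,\underline d)\in A\}$. Counting $A\cap\CN^m$ by first choosing $h_1$ and using an elementary sieve (the Chinese Remainder Theorem) in an interval of length $2x$ gives
\[
|A\cap\CN^m|\;\le\;\sum_{\substack{\underline d\in\CD\\ \operatorname{spread}(\underline d)<2x}}\bigl(2x\,w(\underline d)+O(2^{\pi(k)})\bigr),
\]
where $\operatorname{spread}(\underline d)=\max(0,d_2,\dots,d_m)-\min(0,d_2,\dots,d_m)$, $\nu_p(\underline d)=\#\{0,d_2,\dots,d_m\bmod p\}$, and $w(\underline d)=\prod_{p\le k}(1-\nu_p(\underline d)/p)$ depends only on $\underline d\bmod R$. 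Conversely, each such short $\underline d$ produces at least $4x-\operatorname{spread}(\underline d)\ge 2x$ distinct tuples of $A$ inside $((-2x,2x]\cap\Z)^m$, disjoint for different $\underline d$, so $|A|\ge 2x\cdot\#\{\underline d\in\CD:\operatorname{spread}(\underline d)<2x\}$. Grouping the short $\underline d$ by their residue $\underline b\in(\Z/R)^{m-1}$ and writing $N_{\underline b}$ for the number of them in class $\underline b$, the $O(2^{\pi(k)})$ errors total $O(2^{\pi(k)}x^{m-1})=o(x^m)$ for $x$ large in terms of $k,m$, hence are absorbed into half of $|A\cap\CN^m|>T:=|\CN^m|/(8m\binom{k-1}{m-1})\gg_{k,m}x^m$. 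We are left with the combinatorial problem: given $\sum_{\underline b}w(\underline b)N_{\underline b}\gg T/x$ and $0\le N_{\underline b}\le M$ for all $\underline b$, where $M:=\#\{\underline d\in\Z^{m-1}:\underline d\equiv\underline b\ (\mathrm{mod}\ R),\ |d_i|<2x\ \forall i\}\le(5x/R)^{m-1}$, bound $\sum_{\underline b}N_{\underline b}$ from below; then $|A|\ge 2x\sum_{\underline b}N_{\underline b}$.

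\emph{Step 2: a threshold (``bathtub'') inequality.} For any $\theta>0$,
\[
\sum_{\underline b}w(\underline b)N_{\underline b}\le\theta\sum_{\underline b}N_{\underline b}+M\!\!\sum_{w(\underline b)>\theta}\!\!w(\underline b)=\theta\sum_{\underline b}N_{\underline b}+M\,\Sigma(\theta).
\]
Taking $\theta=\theta^\ast$ as large as possible with $M\,\Sigma(\theta^\ast)\le\tfrac12(T/x)$ forces $\theta^\ast\sum_{\underline b}N_{\underline b}\gg T/x$, so $|A|\gg T/\theta^\ast$. Everything thus reduces to estimating the tail sum $\Sigma(\theta)=\sum_{\underline b\in(\Z/R)^{m-1}:\,w(\underline b)>\theta}w(\underline b)$ accurately enough to pin down $\theta^\ast$.

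\emph{Step 3 (the crux): the tail estimate.} Since $w$ is multiplicative across the primes dividing $R$, Rankin's trick gives, for every integer $s\ge0$,
\[
\Sigma(\theta)\le\theta^{-s}\sum_{\underline b\in(\Z/R)^{m-1}}w(\underline b)^{\,s+1}=\theta^{-s}\,R^{m-1}\prod_{p\le k}\ \mathbb{E}_{\underline c\in(\Z/p)^{m-1}}\!\bigl[(1-\nu_p(\underline c)/p)^{\,s+1}\bigr].
\]
One then expands the local averages in powers of $1/p$, using the exact identity $\mathbb{E}_{\underline c}[1-\nu_p(\underline c)/p]=(1-1/p)^m$ and its higher‑moment analogues (which count maps of $\{2,\dots,m\}$ that surject onto a prescribed residue set); the outcome is
\[
\prod_{p\le k}\mathbb{E}_{\underline c}\bigl[(1-\nu_p(\underline c)/p)^{s+1}\bigr]=\Bigl(\tfrac{\phi(R)}{R}\Bigr)^{m(s+1)}e^{O(ms)},\qquad\text{so}\qquad \Sigma(\theta)\ll\theta^{-s}R^{m-1}\Bigl(\tfrac{\phi(R)}{R}\Bigr)^{m(s+1)}e^{O(ms)}.
\]
Feeding this into Step~2 and optimising the integer $s$ (the near‑optimal $s$ is of order $\log(k/m)/\log\log\log(k/m)$, chosen to balance $\binom{k-1}{m-1}^{1/s}$ against $\log\log(k/m)$) yields
$\theta^\ast\asymp(\log z)^{m-1}/(2^m(\log k)^m)$, where $z$ is the largest integer with $\prod_{p\le z}(p-1)\le k/m$ — so $z=(1+o(1))\log(k/m)$ and $\prod_{p\le z}p=(e^\gamma+o(1))\tfrac km\log\log\tfrac km$, exactly the scale appearing in Lemma~\ref{lem:explicit_upper}. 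Using $|\CN|=(2x+o(x))\phi(R)/R$ (Mertens) and $\binom{k-1}{m-1}\le(e(k-1)/(m-1))^{m-1}$,
\[
|A|\;\gg\;\frac{T}{\theta^\ast}\;\gg\;\frac{(4x)^m\,e^{-\gamma m}}{m\,\binom{k-1}{m-1}\,(\log z)^{m-1}}\;\ge\;\frac{(\log 3m)^{O(m)}}{\bigl(\tfrac km\log\log\tfrac km\bigr)^{m-1}}\,x^m .
\]

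\emph{Where the difficulty lies.} Steps~1 and~2 are routine. The substance of the lemma is the tail estimate of Step~3: one must expand the local moments $\mathbb{E}_{\underline c}[(1-\nu_p(\underline c)/p)^{s+1}]$ carefully enough to exhibit the cancellation that keeps the accumulated constant down to $e^{O(ms)}$ — a cruder bound (e.g. $e^{O(m^2s^2)}$, or the naïve pointwise estimate $w(\underline b)\le\phi(R)/R$, which amounts to the ``line‑by‑line'' argument) loses a spurious factor $(\log k/\log\log(k/m))^{m-1}$ and misses the sharp $\log\log$ — and one must track the $m$‑dependence through the optimisation of $s$ so that every constant produced is genuinely of size $(\log 3m)^{O(m)}$ rather than, say, $m^{O(m)}$. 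Besides this there is only the routine bookkeeping: the sieve error terms (harmless for $x$ large in terms of $k,m$), the mismatch between the box $(-x,x]$ of $\CN$ and the box $(-2x,2x]$ of $A$ (a bounded factor), and the combinatorial constants counting short difference patterns (of size $2^{\pm O(m)}$, hence absorbed).
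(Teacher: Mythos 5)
Your architecture is the same as the paper's: your ``difference patterns'' are its translation classes, your Step 2 threshold plus tail estimate plays the role of its Lemma~\ref{TechLemm2}, and your Rankin moment is its Chernoff bound $\mathbb{E}[e^{rX}]$. The problem is that the quantitative claim on which everything rests --- the moment bound $\prod_{p\le k}\mathbb{E}_{\underline c}\bigl[(1-\nu_p(\underline c)/p)^{s+1}\bigr]=(\phi(R)/R)^{m(s+1)}e^{O(ms)}$ --- is false, and it is false in exactly the place where the $\log\log$ in the theorem has to come from. If that bound held uniformly in $s$, then taking $\theta=(\phi(R)/R)^m e^{Cm}$ and letting $s\to\infty$ would give $\Sigma(\theta)\le R^{m-1}(\phi(R)/R)^m e^{(O(1)-C)ms}\to 0$, i.e.\ $w(\underline b)\le(\phi(R)/R)^m e^{Cm}$ for \emph{every} class $\underline b$; but the diagonal class $\underline b=(0,\dots,0)$ has $w=\phi(R)/R$, which is vastly larger than $(\phi(R)/R)^m$ for $m\ge 2$. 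The failure is at primes $p\lesssim s$: there the $(s+1)$-st moment is dominated by the event $\nu_p<m$ (probability $\asymp m^2/p$, payoff $\approx e^{s/p}$ relative to the main term), so $\mathbb{E}[(1-\nu_p/p)^{s+1}]$ behaves like $(1-1/p)^{s+1}$ rather than $(1-1/p)^{m(s+1)}$, and the discrepancy is not $e^{O(ms)}$.

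This is not a bookkeeping issue one can wave away: with your displayed bound the factor $(e^{O(ms)})^{1/s}=e^{O(m)}$ is independent of $s$, so nothing penalises taking $s$ enormous, the optimisation you describe degenerates, and you would ``prove'' $|A|\gg (\log 3m)^{O(m)}x^m/(k/m)^{m-1}$ with no $(\log\log\frac km)^{m-1}$ --- contradicting Lemma~\ref{lem:explicit_upper}. Indeed your own remark that the optimal $s$ balances $\binom{k-1}{m-1}^{1/s}$ against $\log\log(k/m)$ presupposes a penalty term growing with $s$ that your estimate does not contain. The repair is exactly the paper's move in Lemma~\ref{TechLemm2}: bound the primes $p\le r$ (in your notation $p\lesssim s$) trivially, by $\mathbb{E}[e^{rX_p/p}]\le e^{r/p}$, equivalently $\mathbb{E}[(1-\nu_p/p)^{s+1}]\le(1-1/p)^{s+1}$, which contributes $e^{r\log\log r+O(r)}$; choosing $r\asymp\max\{m^2,m\log k\}$ then produces precisely the $(\log\log k)^{m-1}$ loss in the final answer. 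Steps 1 and 2 of your write-up are fine and match the paper; Step 3 as stated is wrong and needs this small-prime split to be carried out.
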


Before we prove Lemma \ref{TechLemm}, we use it to deduce the lower bound in Theorem \ref{DensityTheorem}.

\begin{proof}[Proof of the lower bound in Theorem \ref{DensityTheorem}] Fix a large positive $x$ and suppose that $A\subset [-2x,2x]^m$ is an $(m,k)$-plausible set, yet $|A| \le (\log 2m)^{-cm}  x^{m}  \big/  ( \frac km\log\log \frac km)^{m-1} $ for some $c>0$. If $c$ is large enough, then Lemma \ref{TechLemm} implies that
\eq{A-hyp}{
\frac{|A\cap \CN^m|}{|\CN^m|}   
   	\leq  \frac{1}{8 m \binom{k-1}{m-1}},
}
where $\CN:=\{n\in(-x,x]\cap \Z: (n,\prod_{p\le k}p)=1 \}$. Now select integers $n_1,\ldots,n_k$ uniformly at random from $\CN$. We claim that there is a positive probability that the $n_i$ are distinct and that there is no $m$-element subset of $\{n_1,\dots,n_k\}$ in $A$, which implies that $A$ is not $(m,k)$-plausible.  We use the Lov\'asz Local Lemma, in which we wish to avoid the events $n_i=n_j$ for $1\leq i<j\leq k$ and $(x_1,\ldots,x_m)\in A$ for any choice of $m$ elements $x_1,\dots,x_m$ among the integers $n_1,\ldots,n_k$. There are $\binom{k}{2}+\binom{k}{m}$ such events. Each event depends on no more than $m$ of the $n_i$, and so depends on no more than $m\binom{k-1}{1}+m\binom{k-1}{m-1}\leq 2m\binom{k-1}{m-1}$ other events. If $x_1,\dots,x_m$ are $m$ out of the $k$ random variables, then the probability that $(x_1,\ldots,x_m)\in A$ is $\le1/(8m\binom{k-1}{m-1})$ by \eqref{A-hyp}. We finally note that the probability that $n_i=n_j$ is $1/|\CN| \sim R/(2x\phi(R))$ as $x\to\infty$, which is certainly $\leq 1/(8 m \binom{k-1}{m-1})$ for $x$ sufficiently large.

The Lov\'asz Local Lemma  now implies that there exists a subset $n_1,\ldots,n_k$ of distinct elements of $\mathcal N$ for which $(x_1,\ldots,x_m)\not\in A$ for all subsets $\{x_1,\ldots,x_m\}$ of $\{n_1,\ldots,n_k\}$. (In fact, the Lov\'asz Local Lemma implies that this is true for a proportion $\geq e^{-k/2m^2}$ of the $k$-subsets of $\mathcal N$.)  Hence, $A$ is not an $(m,k)$-plausible set, a contradiction.
\end{proof}

To prove Lemma \ref{TechLemm}, we first need another result. Given an $m$-tuple $\bs h=(h_1,\dots,h_m)$, we denote by $n_p(\mathbf{h})$ the number of congruence classes mod $p$ covered by $h_1,\dots,h_m$. Note that $1\leq n_p(\bs h)\leq \min\{ p,m\}$ and that both upper and lower bounds are easily obtained for some $m$-tuple $\bs h$.

\begin{lmm}\label{TechLemm2} Let $k\ge m\ge1$. There is an absolute constant $c>0$ such that if $\bs h$ is a randomly selected $m$-tuple from $\CN=\{n\in(-x,x]\cap \Z: (n,\prod_{p\le k}p)=1 \}$ and $x$ is large enough in terms of $m$ and $k$, then the probability that
\[
\prod_{p\leq k} \left(1-\frac{n_p(\bs h)}{p}\right) \left(1-\frac{1}{p}\right)^{-m}
> (\log 3m)^{cm} (\log\log k)^{m-1}
\]
is
\[
 \leq \frac{1}{16 m \binom{k-1}{m-1}} .
\]
\end{lmm}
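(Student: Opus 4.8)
The plan is to estimate the probability in question via Markov's inequality after computing the expectation of a suitable (possibly fractional) moment of the random variable $X(\bs h) := \prod_{p\le k}\bigl(1-\tfrac{n_p(\bs h)}{p}\bigr)\bigl(1-\tfrac1p\bigr)^{-m}$. The central observation is that, since $\bs h=(h_1,\dots,h_m)$ is drawn uniformly from $\CN^m$ with $x\to\infty$, the reductions of $h_1,\dots,h_m$ modulo any fixed prime $p\le k$ are (in the limit) independent and uniform on the $\phi(p)=p-1$ nonzero residue classes; moreover, for distinct primes the joint behavior factors by CRT. Consequently $\log X(\bs h)=\sum_{p\le k} Y_p$ where the $Y_p:=\log\bigl(1-\tfrac{n_p(\bs h)}{p}\bigr)-m\log\bigl(1-\tfrac1p\bigr)$ are (asymptotically) independent. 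First I would record the elementary bounds: $Y_p\le 0$ always when $n_p\ge 1$ only forces $1-n_p/p\le 1-1/p$, wait — more carefully, $n_p(\bs h)\ge1$ gives $1-n_p/p\le 1-1/p$ so $Y_p\le 0$ for... no: we need $n_p\ge m$ would be needed; in fact $Y_p$ can be positive, and it is largest (for the small primes $p<m$) when $n_p(\bs h)$ is as small as possible. The key quantitative input is that $n_p(\bs h)$ small is rare: $\prob[n_p(\bs h)=j]$ is, up to $O_p(1/x)$ error, a Stirling-number-type expression that is $\asymp (p-1)!/((p-1-j)!\,(p-1)^m)\cdot S(m,j)$-ish, so the probability that $n_p$ is much smaller than its typical value $\min\{p-1,m\}(1-(1-\tfrac1{p-1})^{?})$ decays geometrically.

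The cleanest route is to split the primes into three ranges and handle the fractional moment $\E[X(\bs h)^{1/2}]$ (or $X^\theta$ for a small fixed $\theta$): (i) for $p\le m$, bound $1-n_p/p\le 1$ trivially and $(1-1/p)^{-m}$ contributes, so $\prod_{p\le m}(1-1/p)^{-m}=(e^\gamma\log m)^{m}(1+o(1))$ by Mertens — but this is far too big, so one cannot be lossy here; instead, for these small primes one uses that $n_p(\bs h)$ is typically $\asymp p$ (since $m\ge p$ means the $h_i$ cover a positive proportion of residues), so $1-n_p/p$ is typically exponentially small, and the contribution of $\prod_{p\le m}$ to $\E[X^\theta]$ is only $(\log 3m)^{O(m)}$ after the cancellation; (ii) for $m<p\le k$, we have $n_p(\bs h)\le m$, so each local factor is $\le (1-1/p)^{-m}\cdot 1 \approx 1+m/p$, and also $\ge$ something; here $\E[Y_p]$ is computed exactly and the product of $e^{\E Y_p}$ over $m<p\le k$ telescopes (via Mertens again) to $(\log\log k)^{m-1}(\log m)^{O(m)}$, which is exactly the target; (iii) the fluctuations are controlled because $\var(Y_p)\ll (m/p)^2$ for $p>m$ and summably small, so a second-moment / Bennett-type concentration gives that $X(\bs h)$ exceeds its mean by a factor $(\log 3m)^{cm}$ with probability $\le 1/(16m\binom{k-1}{m-1})$, since $\binom{k-1}{m-1}\le (ek/m)^{m-1}$ and the exponential-in-$m$ slack in $(\log 3m)^{cm}$ is enough to absorb this once $c$ is large.

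The main obstacle I expect is range (i): the naive bound on the small-prime product is $(e^\gamma\log m)^m$, which would completely swamp the claimed $(\log 3m)^{O(m)}(\log\log k)^{m-1}$, so one genuinely needs the cancellation between the smallness of $\prod_{p\le m}(1-n_p(\bs h)/p)$ — exponentially small for a typical $\bs h$ — and the largeness of $\prod_{p\le m}(1-1/p)^{-m}$. Making this rigorous requires a good lower-tail estimate for $n_p(\bs h)$ when $p\le m$ (the event that $m$ uniform residues mod $p$ miss many classes), which is a balls-in-bins / coupon-collector computation; the cleanest way is probably to note $\E\bigl[(1-n_p(\bs h)/p)^\theta\bigr]$ can be bounded by inclusion–exclusion over which residues are missed, giving $\sum_{\emptyset\ne S\subsetneq(\Z/p)^*}(-1)^{\cdots}(1-|S|/(p-1))^m(\cdots)$, and then one shows this is $\le (1-1/p)^{\theta m}\cdot(\text{const})$ up to an error that, multiplied over $p\le m$, stays $(\log 3m)^{O(m)}$. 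A secondary technical nuisance is justifying the passage from "uniform over $\CN$" to "independent uniform over nonzero residues mod $p$" with effective error terms uniform in the (finitely many, for fixed $k$) primes — but since $x$ is allowed to be arbitrarily large in terms of $m$ and $k$, this is only a matter of taking $x$ past the relevant threshold and contributes a harmless $o(1)$; I would state this reduction first and then work with the idealized independent model throughout.
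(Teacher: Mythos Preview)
Your plan has two genuine gaps.

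First, your ``main obstacle'' is not one. The trivial bound
\[
\prod_{p\le m}\Bigl(1-\frac{n_p(\bs h)}{p}\Bigr)\Bigl(1-\frac1p\Bigr)^{-m}\le\prod_{p\le m}\Bigl(1-\frac1p\Bigr)^{-m}=\bigl((1+o(1))e^\gamma\log m\bigr)^{m}
\]
is \emph{already} of the shape $(\log 3m)^{O(m)}$, which is precisely the allowance in the statement; no cancellation from the factors $1-n_p/p$ is needed here. The paper uses exactly this trivial bound (with cutoff $m^2$ rather than $m$, so that the birthday estimate is cleanly available for all remaining primes). The balls-in-bins / inclusion--exclusion programme you sketch for range (i) is therefore unnecessary.

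Second, and more seriously, your concentration step does not reach the required tail probability $1\big/\bigl(16m\binom{k-1}{m-1}\bigr)$, which for $k\asymp e^{cm}$ is as small as $e^{-c'm^2}$. A fixed fractional moment with Markov, or a second-moment bound, cannot deliver this; your closing claim that ``the exponential-in-$m$ slack in $(\log 3m)^{cm}$ is enough to absorb $(ek/m)^{m-1}$'' is false as soon as $\log(k/m)\gg\log\log 3m$. Relatedly, the factor $(\log\log k)^{m-1}$ does \emph{not} come from the mean as you assert in (ii): a direct computation gives $\sum_{m<p\le k}\mathbb{E}[Y_p]=O(m/\log m)$, so $\exp\bigl(\sum\mathbb{E}[Y_p]\bigr)=e^{O(m)}$, with no dependence on $k$. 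In the paper that factor emerges instead as the \emph{threshold} in a Chernoff bound with parameter growing with $k$: one writes $\log f(B)\le(m-1)\sum_{m^2<p\le k}X_p/p+O(1)$ with $X_p=\mathbf 1_{\{n_p<m\}}$, uses the birthday estimate $\P(X_p=1)=O(m^2/p)$, and bounds $\mathbb{E}[e^{rX}]$ with $r\asymp m\log k$; this yields $\P\bigl(X>\log\log r+O(1)\bigr)\le e^{-r}\le k^{-m}$, which suffices.
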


\begin{proof} The result is trivial if $k\le m^2$, so we will assume that $k> m^2$. Similarly, we may assume that $k$ is large enough. Moreover, note that there are $\ll_k |\CN|^{m-1}=o_{x\to\infty} (|\CN|^m)$ $m$-tuples $(h_1,\dots,h_m)$ with non-distinct elements. So it suffices to show that if we randomly select a subset $B=\{ b_1,\ldots,b_m\}$ of distinct elements of $\CN$, then the probability that
\[
\prod_{p\leq k} \left(1-\frac{n_p(B)}{p}\right) \left(1-\frac{1}{p}\right)^{-m}
> (\log 3m)^{cm} (\log\log k)^{m-1}
\]
is
\[
 \leq \frac{1}{17 m \binom{k-1}{m-1}} .
\]

By Mertens' Theorem, the contribution of the small primes is
\[
\prod_{p\leq m^2} \left(1-\frac{n_p(B)}{p}\right) \left(1-\frac{1}{p}\right)^{-m} \leq
\prod_{p\leq m^2}  \left(1-\frac{1}{p}\right)^{-m} \leq ( \log 3m)^{c_1m}
\]
for some absolute constant $c_1>0$. If $c\ge2c_1$ and we set
\[
f(B)=\prod_{m^2<p\leq k} \left(1-\frac{n_p(B)}{p}\right) \left(1-\frac{1}{p}\right)^{-m},
\]
then we are left to show that
\[
\P\Big(f(B) > (\log 3m)^{cm/2} (\log\log k)^{m-1} \Big) \le \frac{1}{17 m \binom{k-1}{m-1}} .
\]
Taking logarithms, we have 
\begin{align*}
\log f(B) &\le \sum_{m^2<p\leq k} \left( \frac{m-n_p}p + O\left( \frac{m^2}{p^2} \right) \right)
\leq  \sum_{m^2<p\leq k  }  \frac{m-n_p}p + O\left( \frac{1}{\log m} \right)  \\
&\leq  (m-1) \sum_{m^2<p\leq k}  \frac{X_p(B)}p + O\left( \frac{1}{\log m} \right) ,
\end{align*}
where $X_p(B)=1$ if $n_p(B)<m$, and $X_p(B)=0$ if $n_p(B)=m$. Therefore, if we set $X(B) = \sum_{m^2<p\le k} X_p(B)/p$ and we assume that $c$ is large enough, then it suffices to show that
\[
\P\left( X(B) > \frac{c}{3}\log\log(3m) +\log\log\log k \right) \le \frac{1}{17 m \binom{k-1}{m-1}} .
\]
In turn, the above inequality is a consequence of the weaker estimate
\eq{techlemma-e1}{
\P\left( X(B) > \log\log(\max\{m^2,m\log k\}) + c' \right) \le \frac{1}{17 m \binom{k-1}{m-1}} ,
}
where $c'$ is a sufficiently large constant. 

The $X_p$ can be viewed as independent random variables as we run over all possible sets $B$. As in the birthday paradox, the probability that $X_p=0$ is
\[
\left(1-\frac{1}{p-1}\right)\left(1-\frac{2}{p-1}\right)\ldots \left(1-\frac{m-1}{p-1}\right) =
\exp \left( -\frac{m^2+O(m)}{2p} \right)= 1+O\left(\frac{m^2}{p} \right)    .
\]
For any $r$, if $p\leq r$, then we trivially have that $\mathbb{E}[e^{rX_p/p}] \leq e^{r/p}$, and otherwise
\als{
\mathbb{E}[e^{rX_p/p}] = \mathbb{P}(X_p=0) +  \mathbb{P}(X_p=1) e^{r/p}
&= 1 +  \mathbb{P}(X_p=1) (e^{r/p}-1) \\
&\leq \exp\left(  O\left( \frac{m^2r}{p^2}     \right) \right).
} 
Therefore, for any values of $s$ and $r$, we have that
\begin{align*}
e^{rs}\mathbb{P}\left(X \geq s\right) & \leq \mathbb{E}[e^{rX}] = \prod_{p\leq k} \mathbb{E}[e^{rX_p/p}] \leq \prod_{p\leq r} e^{r/p} \prod_{p\geq r} e^{O(  {m^2r}/{p^2} ) }\\
& \leq \exp\left( r\log\log r + O(r) + O(m^2/\log r)               \right).
\end{align*}
Thus, if $r\ge m^2$, then setting $s=\log\log r+c'$ for $c'$ sufficiently large, we find that
$$
\mathbb{P}\left(X \geq \log\log r+c' \right) \leq e^{-r} .
$$
Substituting $r=\max\{m^2,m\log k\}$ establishes \eqref{techlemma-e1} for $k$ large enough, thus completing the proof of the lemma.
\end{proof}

\begin{proof} [Proof of Lemma \ref{TechLemm}] We partition the elements of $[-2x,2x]^m$ into \emph{translation classes}, putting two elements in the same class if and only if they differ by $(\ell,\ell,\ldots,\ell)$ for some integer $\ell$. Each translation class $T$ intersecting $[-x,x]^m$ contains at least $2x$ elements of $[-2x,2x]^m$ (and at most $6x$). The main idea of the proof is that if we can find at least $U$ elements of $A\cap\CN^m$ whose translation class has at most $M$ elements inside $\CN^m$, then $|A|\ge 2x\cdot U/M$.

Note that $n_p(\bs h)$ is fixed over all $\bs h\in T$, so we denote it by $n_p(T)$. The number of integers $\ell$ for which $\mathbf{h}+(\ell,\ell,\ldots,\ell) \in \mathcal N^m$ is $\leq 3x \prod_{p\leq k}\left(1-\frac{n_p(T)}{p}\right)$ for $x$ large enough. If we set $R=\prod_{p\le k}p$, then Lemma \ref{TechLemm2} implies that the proportion of elements of $\CN^m$ for which 
\eq{translation class bound}{
\prod_{p\leq k} \left(1-\frac{n_p(B)}{p}\right)
\leq \left(\frac{\phi(R)}{R} \right)^m (\log 3m)^{cm} (\log\log k)^{m-1}
}
is $ \le  {1}\big/ 16 m \binom{k-1}{m-1}$, provided that $x$ is large enough, with $c$ being an absolute constant. Since $A$ contains at least $|\CN^m| \big / 8m\binom{k-1}{m-1}$ elements in $\CN^m$ by assumption, we find that $A$ contains at least $|\CN^m| \big / 16m\binom{k-1}{m-1}$ for which \eqref{translation class bound} holds. Therefore, we conclude that
\[
|A| \ge \frac{|\CN^m| }{ 24 m \binom{k-1}{m-1}} 
	\left/ \left\{ \left(\frac{\phi(R)}{R} \right)^m (\log 3m)^{cm} (\log\log k)^{m-1} \right\} \right.
> \frac{x^m    (\log 3m)^{O(m)}}{    ( \frac km\log\log \frac km)^{m-1}}  
\]
for $x$ large enough, and the result follows.
\end{proof}

\section{The number of admissible $k$-tuples}
\label{sec:adm_prop}


Our goal in this section is to show relation \eqref{eqn:adm_prop}. Given a prime $p$, we say that an $m$-tuple is \emph{admissible mod $p$} if its elements do not occupy all of the residue classes mod $p$, so an $m$-tuple is admissible if and only if it is admissible mod $p$ for every prime $p$. By the pigeonhole principle, any set of $m$ integers is admissible mod $p$ if $p>m$, so to test for admissibility we need only work with the primes $p\leq m$. This implies, using the Chinese Remainder Theorem, that 
\[
\rho_{\text{ad}}(m) = \prod_{p\le m} \rho_{\text{ad}}(m,p),
\]
where $\rho_{\text{ad}}(m,p)$ denotes the proportion of $m$-tuples that are admissible mod $p$. 

If $m/\log m<p\le m$, then we note the trivial bounds $(1-1/p)^m\le \rho_{\text{ad}}(m,p) \le 1$, with the lower bound coming from counting $m$-tuples whose elements are not $0\mod p$. Therefore
\[
1\ge \prod_{m/\log m<p\le m} \rho_{\text{ad}}(m,p) \ge \prod_{m/\log m<p\le m}\left(1-\frac{1}{p}\right)^{m}= e^{O(\frac{m\log\log m}{\log m})} .
\]
It remains to compute the contribution of primes $p\le m/\log m$. It is not difficult to determine an exact expression for $\rho_{\text{ad}}(m,p)$ using an inclusion-exclusion argument: the probability that the elements of an $m$-tuple $\bs h$ belong to a given subset of $p-1$ residue classes is $(1-\frac 1p)^m$. There are $\binom p1$ choices of the $p-1$ residue classes. If the elements of $\bs h$ belong to exactly $p-2$ residue classes mod $p$ then $\bs h$ was just counted twice and so we need to subtract the probability of this happening. That probability is $(1-\frac 2p)^k$, and there are $\binom p2$ choices of the $p-2$ residue classes. Continuing in the way, we find that
\[
\rho_{\text{ad}}(m,p) = \sum_{j=1}^{m-1}  \binom{p}{j} (-1)^{j-1} \left( 1 -\frac jp\right)^m \ = \ p\left( 1 -\frac 1p\right)^m-\binom{p}{2}   \left( 1 -\frac 2p\right)^m+\ldots
\]
We note that ratio of two consecutive summands in absolute value is
\als{
\frac{\binom{p}{j} \left( 1 -\frac jp\right)^m}{ \binom{p}{j+1} \left( 1 -\frac{j+1}p\right)^m}
	= \left(1+\frac{1}{p-j-1}\right)^m \frac{j+1}{p-j} 
	&\ge \left(1+\frac{1}{p-2}\right)^m \frac{1}{p-1} \\
	&\ge 2\exp\left\{\frac{m}{p-1}-\log(p-1)\right\} \ge 2\log m 
}
for all $p\le m/\log m$. Therefore, we deduce that
\[
\rho_{\text{ad}}(m,p) = p\left( 1 -\frac 1p\right)^m\left(1+O\left(\frac{1}{\log m}\right)\right)  ,
\]
which implies that
\[
\rho_{\text{ad}}(m)= e^{O(\frac{m\log\log m}{\log m})}  
	\prod_{p\le m/\log m} p\left( 1 -\frac 1p\right)^m
	=  \frac{e^{O(\frac{m\log\log m}{\log m})}  }{(e^\gamma\log m)^m},
\]
which proves (a quantitative version of) relation \eqref{eqn:adm_prop}.

%
%
%
%

\section{A better density in the continuous case}
Analogous to the discrete question considered here, one can also ask about the continuous version, i.e. the set of limit points $\mathcal L$ of the set of values of $(p_{n+1}-p_n)/\log p_n$, where $p_n$ is the $n$th prime.
One can deduce from a uniform version of Zhang's Theorem that for any $0\leq \beta_1\leq \beta_2\leq \ldots \leq \beta_k$ with $k=k_2$ there exists $1\leq i<j\leq k$ such that
$\beta_j-\beta_i\in \mathcal L$; in fact, this was done by Banks, Freiberg, and Maynard \cite{banks}. By a small modification of the argument used to prove
Corollary \ref{cor1}, one can then show that $\mathcal L\cap[0,T]$ has Lebesgue measure $\gtrsim T/(k-1)$.  Somewhat remarkably, Banks, Freiberg, and Maynard were able to go beyond this by showing that, given a sufficiently large $k$-tuple partitioned into $9$ equal parts, at least two of these parts must simultaneously represent a prime.  From this, they deduce that $\mathcal L\cap[0,T]$ has Lebesgue measure $\gtrsim T/8$.



\begin{thebibliography}{10}

\bibitem{banks}
W. D. Banks, T. Freiberg, J. Maynard, \emph{On limit points of the sequence of normalized prime gaps}, preprint.


\bibitem{gran}
A. Granville,   \emph{Primes in intervals of bounded length}, Bull. Amer. Math. Soc., to appear

\bibitem{maynard}
J. Maynard, \emph{Small gaps between primes}, Annals of Mathematics, to appear.


\bibitem{clusters}
J. Maynard, \emph{Dense clusters of primes in subsets}, preprint.

\bibitem{pintz-polignac}
J. Pintz, \emph{Polignac Numbers, Conjectures of Erd\H{o}s on Gaps between Primes, Arithmetic Progressions in Primes, and the Bounded Gap Conjecture}, preprint.


\bibitem{polymath8b}
D.H.J. Polymath, \emph{Variants of the Selberg sieve, and bounded intervals containing many primes}, Research in the Mathematical Sciences, to appear.

\bibitem{tao}
T. Tao, \emph{Public communication}, http://terrytao.wordpress.com/

\bibitem{zhang}
Y. Zhang, \emph{Bounded gaps between primes}, Annals of Mathematics, 179 (no. 3), pp. 1121-1174.


\end{thebibliography}
  \end{document}